\newcommand\reallywidehat[1]{%
\savestack{\tmpbox}{\stretchto{%
  \scaleto{%
    \scalerel*[\widthof{\ensuremath{#1}}]{\kern.1pt\mathchar"0362\kern.1pt}%
    {\rule{0ex}{\textheight}}
  }{\textheight}%
}{2.4ex}}%
\stackon[-6.9pt]{#1}{\tmpbox}%
}
\numberwithin{equation}{section} \setlength{\oddsidemargin}{.0001in}
\newtheorem{thm}{Theorem}[section]
\newtheorem{defn}[thm]{Definition}
\newtheorem{lemma}[thm]{Lemma}
\newtheorem{rmrk}[thm]{Remark}
\newcommand{\R}{\mathbb{R}}
\newtheorem{prop}[thm]{Proposition}
\newcommand{\C}{\mathbb{C}}
\newcommand{\ba}{\begin{array}}
\newcommand{\ea}{\end{array}}
\newcommand{\bthm}{\begin{thm}}
\newcommand{\ethm}{\end{thm}}
\newcommand{\bstp}{\begin{stp}}
\newcommand{\estp}{\end{stp}}
\newcommand{\blemma}{\begin{lemma}}
\newcommand{\elemma}{\end{lemma}}
\newcommand{\bprop}{\begin{prop}}
\newcommand{\eprop}{\end{prop}}
\newcommand{\bpf}{\begin{pf}}
\newcommand{\epf}{\end{pf}}
\newcommand{\bdefn}{\begin{defn}}
\newcommand{\edefn}{\end{defn}}
\newcommand{\brk}{\begin{rmrk}}
\newcommand{\erk}{\end{rmrk}}
\newcommand{\bcrl}{\begin{crl}}
\newcommand{\ecrl}{\end{crl}}
\newcommand{\beqn}{\begin{equation}}
\newcommand{\eeqn}{\end{equation}}
\renewcommand{\leq}{\leqslant}
\newcommand{\beq}{\begin{equation}}
\newcommand{\eeq}{\end{equation}}
\newcommand{\bea}{\begin{eqnarray}}
\newcommand{\eea}{\end{eqnarray}}
\newcommand{\dive}{\mathrm{div}\,}
\newcommand{\bbm}{\mathbf{m}}
\newcommand{\bbu}{\mathbf{u}}
\title{Bounds on the energy of a soft cubic ferromagnet with large magnetostriction}
\begin{document}
\renewcommand\Authfont{\small}
\renewcommand\Affilfont{\itshape\footnotesize}

\author[1]{Raghavendra Venkatraman \footnote{rvenkatr@andrew.cmu.edu}}
\author[2]{Vivekanand Dabade\footnote{vivekanand.dabade@polytechnique.edu}}
\author[3]{Richard D. James\footnote{james@umn.edu}}
\affil[1]{Department of Mathematical Sciences, Carnegie Mellon University, Pittsburgh PA.}
\affil[2]{LMS, \'Ecole Polytechnique, CNRS, Universit\'e Paris-Saclay, Palaiseau 91128, France}
\affil[3]{Aerospace Engineering and Mechanics Department, University of Minnesota, Minneapolis, MN.}
\maketitle
\begin{center}
    Dedicated to Peter Sternberg on the occasion of his sixtieth birthday, with respect and admiration. 
\end{center}

\noindent {\bf Abstract:} We complete the analysis initiated in \cite{JNLS2018} on the micromagnetics of cubic ferromagnets in which the role of magnetostriction is significant. We prove ansatz-free lower bounds for the scaling of the total micromagnetic energy including magnetostriction contribution, for a two-dimensional sample. This corresponds to the micromagnetic energy-per-unit-length of an infinitely thick sample. A consequence of our analysis is an explanation of the multi-scale zig-zag Landau state patterns recently reported in single crystal Galfenol disks from an energetic viewpoint. Our proofs use a number of well-developed techniques in energy-driven pattern formation.


\section{Introduction and setup of the problem} We are interested in deriving optimal energy scaling laws for a ferromagnetic sample with cubic anisotropy. Important examples of cubic ferromagnets include Iron \cite{lifshitz1945magnetic}, Permalloy \cite{desimone2001two}, Tefenol-D \cite{desimone2002constrained}, and Galfenol \cite{chopra2015non}. These ferromagnets, when magnetized, undergo spontaneous elastic deformation; this is known as magnetostriction. Iron and Permalloy are low magnetostrictive materials, whereas Terfenol-D and Galfenol are large magnetostrictive materials. Materials with large magnetostriction exhibit a fascinating interplay of elasticity and magnetism. Inspired by recent experiments on Galfenol reported in \cite{chopra2015non}, we initiated a variational study of cubic ferromagnets with magnetostriction in \cite{JNLS2018}. In \cite{JNLS2018}, we first analyzed Young measures arising as limits of minimizing sequences for the so-called no-exchange relaxation and applied this analysis to derive macroscopic properties of Galfenol. Restoring the exchange energy term, defined below, we then derived rigorous upper bounds for the scaling of the optimal energy for the full micromagnetic energy functional in the presence of magnetostriction. Our upper bounds required fairly complex multi-scale constructions inspired by the micrographs in \cite{chopra2015non}. The goal of the present paper is to supplement this upper bound with an ansatz-free lower bound, within a two dimensional setting that is motivated by the geometry of the sample in \cite{chopra2015non}. This lower bound demonstrates that within the parameter regime of Galfenol, one can not do energetically better than our constructions from \cite{JNLS2018}. 
\begin{figure}
\begin{center}
\includegraphics[scale=0.75]{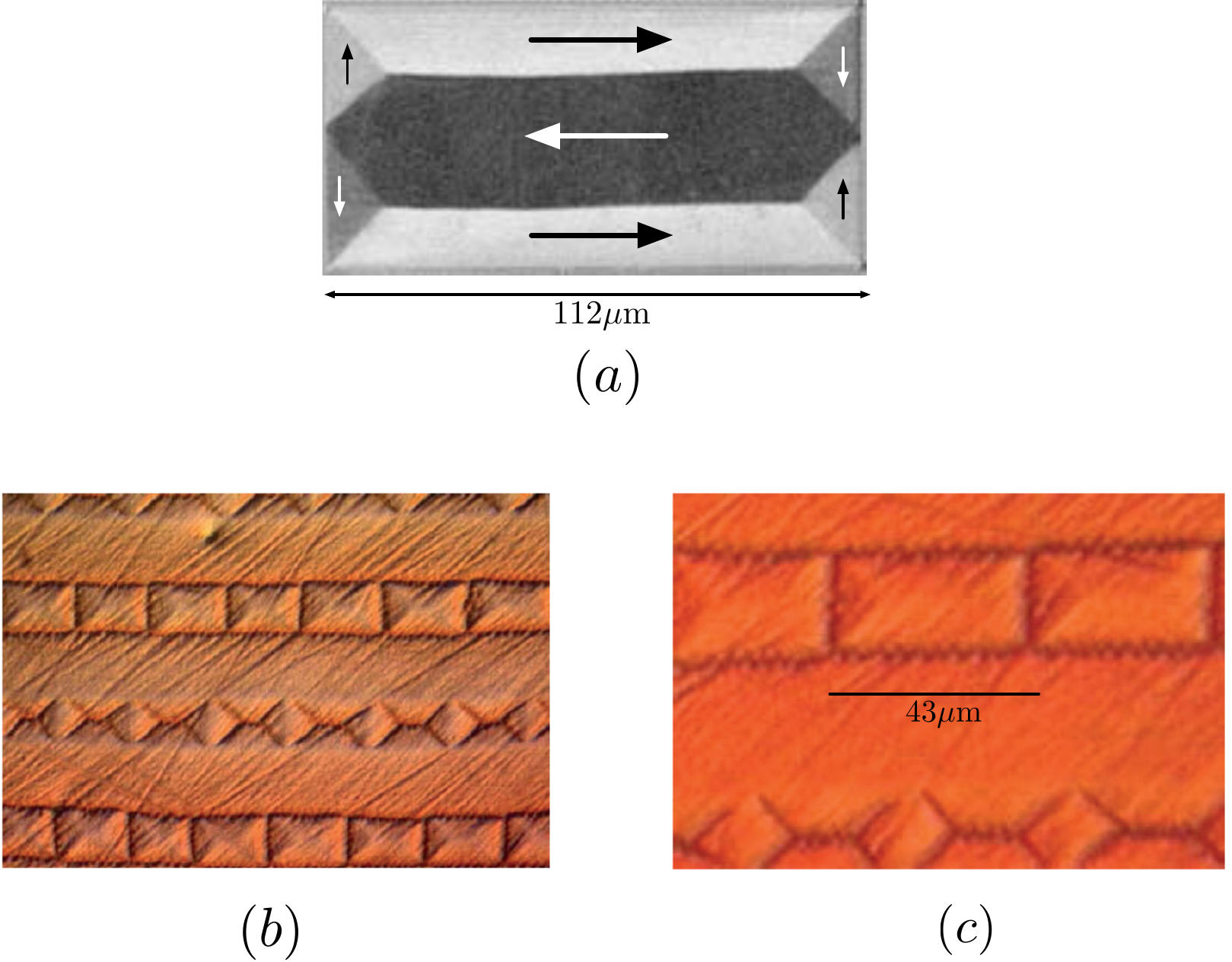}
\caption{Experimental Micrographs. (a) Normal Landau state seen in Permalloy, \cite{hubert2008magnetic}.  (b) and (c) Zig-zag Lanadau state seen in Galfenol, \cite{chopra2015non}.}
\label{fig: micrographs}
\end{center}
\end{figure}

\noindent Towards describing the functional that is at the core of our paper, we first set some notation. We let $G \subset \R^2$ denote the unit cube $\left(-\frac{1}{2},\frac{1}{2}\right)^2,$ We define the functions $\varphi:\R^2 \to \R$ and $\epsilon_0: \R^2 \to \R^{2\times 2}$ by the formulas 
\begin{align}
    \varphi(z) = \varphi(z_1, z_2) &:= \left(z_1^2 - z_2^2 \right)^2, \\
    \epsilon_0(z) =  z \otimes z {\, - \,} \frac{1}{2}I_2 &= \begin{pmatrix}
    z_1^2 & z_1 z_2\\
    z_2z_1 & z_2^2 
    \end{pmatrix} {\, - \,} \frac{1}{2}\begin{pmatrix}
    1 & 0 \\0 & 1
    \end{pmatrix},
\end{align}
{ where $z \otimes z$ denotes the tensor product of $z$ with itself, so that the matrix $(z\otimes z)_{ij} = z_i z_j, $ as indicated above}.
Finally, for a function $u \in H^1(G;\R^2),$ we define 
\begin{align}
    \label{symgrad}
    \epsilon(u) := \frac{\nabla u + (\nabla u)^T}{2}.
\end{align}

\noindent Let $v \in H^1(G;\R^2)$ and let $\tilde{v}$ denote the extension of $v$ to $\R^2$ by zero outside of $G.$ For a fixed positive number $\mu$ we consider the family of (\textit{fully non-dimensionalized}) variational problems indexed by $\eta > 0$ given by
\begin{align} \label{fullenergy}
    &\mathcal{F}_\eta (v;G) = \mu \eta \int_{G}  |\nabla v|^2 \,d{\bf{x}}+ \frac{\mu }{\eta}\int_G  \left((|v|^2 - 1)^2 + \varphi(v) \right) \,d{\bf{x}} +  \|\dive \tilde{v}\|_{H^{-1}(\R^2)}^2 \\ \notag & \quad + \inf_{\bbu \in H^1(G;\R^2)}\int_G \|\epsilon(\bbu) - \epsilon_0(v)\|^2 \,d{\bf{x}} 
\end{align}
 The motivation for this scaling and the derivation of this model will be made clear in Sec. \ref{sec:physics}; for now, let us simply remark that in this scaling, the energies $\mathcal{F}_\eta (v)$ are bounded as $\eta \to 0.$ As we will explain subsequently, the $\eta \to 0$ asymptotics of the minimum energies \eqref{fullenergy} are captured by the functional 
\begin{align}
\mathcal{F}_0(\bbm) := \mu \int_G |\nabla \bbm| + \inf_{\bbu \in H^1(G;\R^2)} \int_G \|\epsilon(\bbu) - \epsilon_0(\bbm)\|^2 \,d {\bf x} + \|\dive \, \tilde{\bbm}\|_{H^{-1}(\R^2)}^2 
\end{align}
among competitors $\bbm = (m_1, m_2)$ such that $\bbm \in \mathcal{M}$ defined by
\begin{align}
\label{d.mdom}
\mathcal{M}:= \left\{ \bbm \in BV(G;\R^2) : \bbm({\bf{x}}) \in \mathbb{K}:= \Big\{ \big( \pm \frac{1}{\sqrt{2}}, \pm \frac{1}{\sqrt{2}}  \big)\Big\} \mbox{ at almost every } {\bf{x}} \in G.\right\}.
\end{align}
For any $\bbm \in \mathcal{M}$ we denote by $\tilde{\bbm}$ its trivial extension outside $G$. We also introduce the set 
\begin{equation}\label{d.mdom0}
\begin{aligned}
\mathcal{M}_0 := \left\{ \bbm \in \mathcal{M}: \int_{-1/2}^{1/2} m_1 m_2(x,y)\,dy = 0 \quad \mbox{ for a.e. } x \in (-1/2,1/2) \mbox{ and } \right. \\ \left. \int_{-1/2}^{1/2} m_1 m_2(x,y)\,dx = 0 \quad \mbox{ for a.e. } y \in (-1/2,1/2)  \right\}. 
\end{aligned}
\end{equation}

\noindent 
Our main theorem is 
\bthm \label{mt}
There exists universal constants $0 < c_1 \leqslant 1,$ and { $c_2 > 0,$}  such that the following holds: for any $\mu \in (0, c_1),$ and { any sequence $\{v_\eta\}_{\eta > 0}, $ such that $\sup_{\eta > 0} \mathcal{F}_\eta(v_\eta) < \infty,$  
\begin{align} \label{e.upbnd}
\limsup_{\eta \to 0} \mathcal{F}_\eta (v_\eta)  \lesssim c_2 \mu^{2/3}.
\end{align}
}
If in addition, $\bbm \in \mathcal{M}_0,$ then we also have a matching ansatz free lower bound:
\begin{align} \label{e.lbnd}
\mathcal{F}_0(\bbm) \gtrsim \frac{1}{c_2} \mu^{2/3}.
\end{align}

\ethm

\noindent The proof of the upper bound inequality is essentially contained in \cite{JNLS2018}, and is recalled briefly in Section \ref{sec:upbnd}. 
The proof of the lower bound inequality is the content of Section \ref{sec:lbnd}. We conjecture that the lower bound in \eqref{e.lbnd} holds for any $\bbm \in \mathcal{M}$; we will discuss the obstructions faced in Section \ref{sec:lbnd}. The rest of this introduction is devoted to deriving the energy \eqref{fullenergy} from the micromagnetic functional.

\subsection{Derivation of the energy \eqref{fullenergy} from micromagnetics} \label{sec:physics}
\subsubsection*{Geometry and motivation for the two-dimensional reduction: } The geometry of the sample we have in mind is cylindrical, with axis along the $z-$axis. The characteristic dimension $L$ in the $x-y$ plane is significantly smaller than its thickness along the $z-$axis. This geometry is motivated by experimental values of $L\sim 10^{-5}m$ and sample thickness along the $z-$axis $\sim 10^{-3}m$; see Extended Data Figure 4 in \cite{chopra2015non}. It permits us to work with a two-dimensional energy, that we think of as the energy per unit length of an infinitely long sample; we however do not attempt to derive this energy from the full three-dimensional model via a rigorous limiting procedure. The two-dimensional nature of our model is, however, crucial to our analysis of the magnetostriction and the magnetostatic energies. Indeed, the analysis of the magnetostriction energy relies on the Fourier analysis of a certain nonlinear function of the magnetization: this is made tractable by the nonconvex constraint that the magnetization takes values in {the set $\{(\pm \frac{1}{\sqrt{2}},\pm \frac{1}{\sqrt{2}})\} $}, yielding (somewhat surprising) cancellations. We point out that the micrographs for Galfenol, which were the original motivation of our project, have essentially in-plane magnetization. Furthermore, our two-dimensional constructions in \cite{JNLS2018} accurately predict the (macroscopic) average strain as measured in experiments on Galfenol. 
\subsubsection*{Setup from micromagnetics}
\noindent Let $\Omega \subset \R^2$ denote an open bounded domain that represents the cross-section of the ferromagnetic sample. Within the variational theory of micromagnetics, the magnetization of the sample is described by a vector field $\bbm: \Omega \to \R^3$ that satisfies $|\bbm| = 1$ almost everywhere in $\Omega.$ The magnetization $\bbm$ is extended by zero outside of $\Omega.$ With an eye of working within a two-dimensional theory, we limit ourselves to competitors of the form $\bbm(x,y) = (m_1(x,y), m_2(x,y), 0).$ Our starting point towards formally deriving \eqref{fullenergy} is the full micromagnetic energy including magnetostriction, in the absence of an external applied magnetic field:

\begin{align} \label{energy1}
{\mathbf{\mathbb{F}}} (\mathbf{m})= 
            & \underbrace{{{A}}{\int_\Omega |{\nabla \mathbf{m}}|^2}\,d{\bf{x}}}_{\text{{exchange energy}}}+
              \underbrace{{\textit K_{a}}{\int_\Omega {\varphi(\mathbf{m})\,d{\bf{x}}}}}_{\text{anisotropy energy}}+
              \underbrace{c_{44} \lambda_{111}^2 \tilde{e}_\text{mag}(\bbm)}_{\text{magnetostriction energy}}+\underbrace{{\textit K_{d}}{\int_{\mathbb{R}^2} | \mathbf{{h}_\text{m}}|^{2}\,d{\bf{x}}}}_{\text{magnetostatic energy}}.   \\ \label{magstric1}
\text { where }&{\mathbf{\tilde e}}_{mag}(\bbm) = \inf_{\mathbf \mbox{ {{\mathbf{u} \in H^1({\Omega};\R^2)}}}} \int_{\Omega} \big( \mathbf{\tilde E(u)} - \mathbf{\tilde E_0}(\bbm) \big) \cdot \mathbb{\tilde C}\big( \mathbf{\tilde E(u)} - \mathbf{\tilde E_0}(\bbm) \big) \,d\mathbf{x}. 
\end{align} 

\noindent Here, $A, K_a, c_{44}, \mathbb{C}, K_d, c_{44}\lambda_{111}^2 $ are all material parameters that we describe below. The magnetostriction energy defined in \eqref{magstric1} corresponds to the least linear elastic energy associated to a preferred non-dimensional strain tensor $\tilde{\mathbf{E}}_0(\bbm).$ The last term in the energy \eqref{energy1} is the magnetostatic energy associated to a magnetization $\bbm:$ it is derived from Maxwell's equations, and in short, penalizes the divergence of the field $\bbm$ in a negative Sobolev norm. We will explain both these energies in greater detail in the paragraphs to come. We point out that in our formulation above, the total micromagnetic energy ${\mathbf{\mathbb{F}}} (\mathbf{m})$ represents the three-dimensional energy per unit length along the $z$-direction and has dimensions [energy/length]. 
\subsubsection*{Exchange and magnetocrystalline anisotropy energies}
\noindent The \textit{exchange constant} is denoted by $A$ and typically satisfies $0 < A \ll 1$. In the literature on energy-driven pattern formation, it is also common (see \cite{choksi1998bounds,choksi1999domain,JNLS2018}) to use the so-called sharp interface functional, in which the exchange energy is measured by the BV semi-norm of the magnetization $\bbm$ as opposed to the Dirichlet energy as in \eqref{energy1}. Thus, in these studies, one might see an expression of the form 
\begin{align}
    \label{sharpexchange}
    \mu \int_\Omega |\nabla \bbm|,
\end{align}
where $\mu > 0$ is the \textit{wall cost} per unit length. Before discussing how the sharp interface and diffuse energies are related, we discuss the magnetocrystalline anisotropy energy. 

\noindent The magnetocrystalline anisotropy energy, or simply anisotropy, sets certain crystallographic directions, referred to as the \textit{easy axes}, energetically preferred for the magnetization $\bbm$. The \textit{anisotropy energy} density $(K_a \varphi(\bbm))$ is determined by the anisotropy energy coefficient ${\textit K_{a}} > 0$ \footnote{Our choice of signs here is a bit different from convention: the materials that are of interest in this paper are ``negative anisotropy materials'', with $K_a < 0$ and correspondingly $\varphi$ is defined by the negative of Eq. \eqref{anisotropy}, nevertheless rendering the product $K_a \varphi$ nonnegative. } and $\varphi(\mathbf{m})$ given by
\begin{align} \label{anisotropy}
\varphi(\mathbf{m}) = \bigg(\frac{1}{4} -  m_{1}^2 m_{2}^2\bigg) = \frac{1}{4}(m_1^2 - m_2^2)^2.
\end{align}
The wells of the anisotropy energy are referred to as the \textit{easy axes} of the sample, and in our case are given by $\pm \bbm_1, \pm \bbm_2,$ where ${\bf{m}_{1}}=  \big(\nicefrac{1}{\sqrt{2}},\nicefrac{1}{\sqrt{2}},0\big)$ and ${\bf{m}_{2}}= \big(\nicefrac{1}{\sqrt{2}},-\nicefrac{1}{\sqrt{2}},0\big)$. { Note that the anisotropy energy density given in (\ref{anisotropy}) suppresses the out of plane magnetization.}

\noindent How are the sharp-interface version of the exchange energy, \eqref{sharpexchange} and the diffuse counterpart in \eqref{energy1} related? To answer this question, it is helpful to record the dimensions of the various quantities in question. Since our functional $\mathbb{F}$ from \eqref{energy1} has dimensions of energy per unit length (in the $z-$direction),  one has
\begin{align} \label{dimensions}
  [A] = \frac{[\mbox{Energy}]}{[\mbox{Length}]}, \hspace{.5cm} [K_a] = \frac{[\mbox{Energy}]}{[\mbox{Length}]^3} \hspace{.5cm} [\mu] = \frac{[\mbox{Energy}]}{[\mbox{Length}]^2}.
\end{align}
For sufficiently large values of the anisotropy constant $K_a$, the magnetization $\bbm$ stays close to the easy axes of the sample, thus being essentially piecewise constant and forming \textit{magnetic domains}. Different domains are separated by thin transition layers. Competition between the diffuse exchange energy $A \int |\nabla \bbm|^2 $ and the anisotropy energy $\int K_a\varphi(\bbm)$ sets a \textit{surface tension} $\mu$ that effectively penalizes the \textit{surface area of the transition layer} $\int |\nabla \bbm|.$ The width of a transition layer must necessarily be smaller than the characteristic length $L$, which yields 
\begin{align} \label{thintransition}
    \sqrt{\frac{A}{K_a}} < L.
\end{align}
Under these circumstances, one can show that the surface tension is related to the exchange constant $A$ by 
\begin{align} \label{Aandgamma}
    \mu^2 \sim A K_a.
\end{align}
\noindent From the point of view of optimal energy scaling laws, these two formulations are asymptotically equivalent due to the Modica-Mortola inequality, see \cite[Section 6.8]{desimone2006recent}. The sharp-interface formulation has certain advantages: it permits one to focus attention on the domain morphology without having to simultaneously resolve the internal structure of walls. It is the sharp-interface formulation that we used in \cite{JNLS2018}, because this simplified our computations concerning the upper bound. The rigorous connection between the sharp interface and diffuse formulations is conveniently done using $\Gamma-$convergence; see \cite{sternberg86}, also \cite[Section 6.8]{desimone2006recent}. The diffuse formulation naturally has an extra small length-scale ${\eta} > 0$ corresponding to the diffuse wall thickness, as compared to the sharp interface limit. The ${\eta} \to 0$ limiting procedure yielding the sharp interface limit can then be made precise in the parameter regime $A \sim \mu  {\eta}, K_a \sim \frac{\mu}{{\eta}},$ consistent with \eqref{Aandgamma}. \par   
While the magnetization $\bbm$ is $S^1-$valued and the diffuse exchange energy which is present in the full micromagnetic energy \eqref{energy1} penalizes the $H^1-$seminorm of $\bbm$, it is well known \cite{bbh} that $S^1-$valued vector fields in the plane having vortices have infinite $H^1$-seminorm. However, even the normal Landau state, refer to Figure \ref{fig: micrographs} (a) has vortices, at each triple junction. \par 
\noindent A convenient ``remedy'' to this issue is to relax the ``hard'' constraint $|\bbm| = 1,$ and replace $\bbm$ by a vector field $v : \Omega \to \R^2$ along with a penalty term in the energy which forces $v$ to be nearly $S^1$-valued; see again \cite{bbh}. This corresponds to the term Ginzburg-Landau term $\frac{\mu}{\eta}\int_G (|v|^2 - 1)^2 \,d{\bf{x}}$ in the energy \eqref{fullenergy}, where $\eta$ is a non-dimensional version of { the wall thickness}, as will be explained below. In this scaling, the cost of a vortex is $\eta |\log \eta|$ which vanishes in the $\eta \to 0^+$ limit considered in Theorem \ref{mt}. \par 
\noindent While the Ginzburg-Landau penalty might seem like a mathematical artefact, it can be physically thought of as penalizing out-of-plane magnetization, and the walls correspondingly as Bloch walls. Since we wish to work with a two-dimensional theory, we do not pursue this interpretation; the reader might wish to see \cite{fengbo} for instance. 
\subsubsection*{Magnetostriction energy.}
\noindent We next turn to the \textit{magnetostriction energy}, the third term in \eqref{energy1}. Our reference for modeling this energy is \cite{james1998magnetostriction} which relies on linear elasticity. For notational consistency with \cite{james1998magnetostriction}, and for the convenience of the reader, we briefly describe full three-dimensional magnetostriction. Subsequently, we describe our two-dimensional reduction. The preferred strain associated to a magnetization $\bbm = (m_1, m_2, m_3): \Omega \to \mathbb{S}^2$ is given by 
\begin{align}
{\bf{E}}_{0}({\bf{m}}) = \frac{3}{2} \bigg(  \lambda_{100} ({\bf{m}} \otimes {\bf{m}} - \frac{1}{3} {\bf{I}})
+ (\lambda_{111} - \lambda_{100}) \sum_{i \ne j} {m}_i {m}_j   {\bf{e}}_i \otimes {\bf{e}}_j \bigg),\label{Ehat1}
\end{align}
where the vectors \{${\bf{e}}_1, {\bf{e}}_2, {\bf{e}}_3$\} in \eqref{Ehat1} refer to an orthonormal basis parallel to the cubic axes. The constants $\lambda_{100}$ and $\lambda_{111}$ are referred to as the magnetostriction constants of the cubic material.

The elastic energy associated to a magnetization $\bbm$ and a displacement $\bbu \in H^1(\Omega;\R^3)$ is given by 
\begin{align*}
    \frac{1}{2}\int_\Omega \left(\mathbf{E}(\mathbf{u)} - \mathbf{E}_0(\bbm) \right): \mathbb{C} \left( \mathbf{E}(\mathbf{u)} - \mathbf{E}_0(\bbm) \right)\,d \mathbf{x}, \hspace{1cm} \mathbf{E}(\bbu) = \frac{\nabla \bbu + \nabla \bbu^T}{2}.
\end{align*}

\noindent In the above, $\mathbb{C}$ is a fourth order, positive-definite, symmetric tensor, referred to as the elastic modulus. For a cubic material such as Galfenol, the elastic modulus $\mathbb{C}$ consists of three independent components: $c_{11}, c_{12}$ and $c_{44}$.  Minimizing the elastic energy over all mechanically compatible strains, i.e. all strains $\mathbf{E}$ that arise as a symmetrized gradient of an $H^1-$displacement field $\bbu$ results in \eqref{magstric1}. For a brief discussion on the role of mechanical compatibility in our variational problem, we refer the reader to \cite{JNLS2018}. 

\noindent With this background on magnetostriction, we turn to making simplifications that result in a two-dimensional theory that we use in our analysis. First, for Galfenol, one has $c_{11} \approx c_{12} \approx c_{44} \approx 10^{11}$ $\nicefrac{N}{m^2}$, refer to \cite{zhang2005phase}. We will therefore only use one elastic constant, namely $c_{44},$ and set 
\begin{align} \label{cij}
    c_{11} = c_{12}=c_{44}.
\end{align}
Furthermore, as for the magnetostriction constants, refer to \cite{JNLS2018} and references therein, one has $\lambda_{100} \approx \lambda_{111} \approx 10^{-4}$. Consequently, we set 
\begin{align}
    \label{lambdas}
    \lambda_{100} = \lambda_{111}.
\end{align}
With these assumptions, the preferred strain simplifies to 
\begin{align}
{\bf{E}}_{0}({\bf{m}}) =  \frac{3\lambda_{111}}{2}  \bigg(({\bf{m}} \otimes {\bf{m}} - \frac{1}{3} {\bf{I}}\bigg).\label{Ehat2}
\end{align}

\noindent Second, we note that in our two-dimensional framework, since $\bbm$ is in-plane, i.e. $\bbm$ takes the form $(m_1(x,y), m_2(x,y))$ and $m_3 = 0,$ the preferred strain reduces to 
\begin{equation}
\mathbf{E_{0}(m)}= \frac{3\lambda_{111}}{2}
\begin{pmatrix}
m_1^2 - \frac{1}{3} & m_1 m_2 & 0\\
m_1 m_2 & m_2^2 - \frac{1}{3} & 0\\
0 & 0 & -\frac{1}{3}
\end{pmatrix}. \, \,
\notag
\end{equation}
Motivated by the micrographs in \cite{chopra2015non}, a more significant restriction that we make is to look at displacements of the form 
\begin{equation} \label{dispassump}
\mathbf{u}(x,y,z)= \left(u_1(x,y), u_2(x,y), \frac{-\lambda_{111}}{2} z\right)    
\end{equation}
With this choice, the actual strain is given by
\begin{equation}
{\mathbf{E}}(\mathbf{u})=
\begin{pmatrix}
\frac{\partial u_{1}}{\partial x} & \frac{1}{2}\bigg(\frac{\partial u_{1}}{\partial y}+\frac{\partial u_{2}}{\partial x}\bigg) &  0        \\
\frac{1}{2}\bigg(\frac{\partial u_{1}}{\partial y}+\frac{\partial u_{2}}{\partial x}\bigg) & \frac{\partial u_{2}}{\partial y}  &  0       \\
0 & 0  &  -\frac{\lambda_{111}}{2}       \\
\end{pmatrix}. \, \,
\notag
\end{equation}
It is thus clear that we can identify $\bbu$ with a vector in $\R^2$ of the form $\bbu(x,y) = (u_1(x,y),u_2(x,y)),$ and correspondingly identify the actual and preferred strains with their top-left $2 \times 2$ blocks, viz. 
\begin{align}
    \overline{\epsilon_0}(\bbm) &= \frac{3\lambda_{111}}{2} \left( \bbm \otimes \bbm  - \frac{1}{3} I_2\right) = \frac{3\lambda_{111}}{2}\begin{pmatrix}
    m_1^2 - \frac{1}{3} & m_1 m_2 \\
    m_1 m_2 & m_2^2 - \frac{1}{3}
    \end{pmatrix} , \\
   \overline{\epsilon}(\bbu) & = \begin{pmatrix}
    \frac{\partial u_{1}}{\partial x} & \frac{1}{2}\bigg(\frac{\partial u_{1}}{\partial y}+\frac{\partial u_{2}}{\partial x}\bigg)        \\
\frac{1}{2}\bigg(\frac{\partial u_{1}}{\partial y}+\frac{\partial u_{2}}{\partial x}\bigg) & \frac{\partial u_{2}}{\partial y} 
    \end{pmatrix}.
\end{align}

\noindent As our third simplification, we note that the constraint $m_1^2 + m_2^2 = 1$ renders the tensor $\overline{\epsilon_0}(\bbm)$ to have trace $\frac{\lambda_{111}}{2}.$ For simplicity in our estimates, it is desirable to have the preferred strain be trace-free. We therefore define 
\begin{align}
    \label{strainsfinal}
    {\overline{\overline{\epsilon_0}}}(\bbm) = \overline{\epsilon_0}(\bbm) {\, + \,} \frac{\lambda_{111}}{2}I_2, \hspace{1cm}
    {\overline{\overline{\epsilon}}}(\bbu) = \overline{\epsilon}(\bbu) {\, + \,} \frac{\lambda_{111}}{2}I_2, 
\end{align}
where $I_2$ is the identity matrix in $\R^2$. Obviously, this does not change the elastic energy associated to a magnetization $\bbm$ and a corresponding displacement $\bbu$ of the form \eqref{dispassump}.

\noindent 
Our last simplification is one of non-dimensionalization: we set
\begin{align}
    \label{strainsfinal1}
     \mathbb{\tilde C}=\frac{\mathbb{C}}{c_{44}},\hspace{0.5cm} {\epsilon_0}(\bbm)=\frac{{\overline{\overline{\epsilon_0}}}(\bbm)}{\lambda_{111}} , \hspace{0.5cm}
    {\epsilon}(\bbu)=\frac{{\overline{\overline{\epsilon}}}(\bbu)}{\lambda_{111}}.  
\end{align}

\noindent Putting together \eqref{cij}, \eqref{lambdas}, \eqref{strainsfinal} and \eqref{strainsfinal1} we find the magnetostriction energy from equation \eqref{magstric1} associated to a magnetization $\bbm$ is given by 
\begin{align}
    \label{magfinal}
    \inf_{\bbu \in H^1(\Omega;\R^2)} c_{44}\lambda_{111}^2\int_\Omega \|\epsilon(\bbu) - \epsilon_0(\bbm)\|^2 \,d{\bf{x}}
\end{align}
with $\| A \|^2$ denoting the sum of the square of the entries of the matrix $A$. We will denote the  magnetostriction energy coefficient as $c_{44}\lambda_{111}^2$.
\subsubsection*{Magnetostatic energy.}
\noindent The final term in our energy is the \textit{magnetostatic energy} and the relevant material paramater is known as magnetostatic energy coefficient $\textit K_{d}$. The magnetostatic energy penalizes the induced or stray field $\mathbf{h}_\bbm$ associated to the magnetization $\bbm.$ The induced field  $\mathbf{{h}_\text{m}}$ is obtained by solving Maxwell's equations of magnetostatics on $\R^2$,
\begin{subequations}
\begin{align}
\nabla\cdot(\mathbf{{h}_\text{m}} + \mathbf{m}) &= 0, \label{maxwella}\\
\nabla \times \mathbf{{h}_\text{m}}&=0\label{maxwellb},
\end{align}
\end{subequations}
in $H^{-1}(\R^2)$. We remind the reader that since our sample is infinitely thick in the $z-$direction, the magnetostatic energy in \eqref{energy1} is interpreted as the magnetostatic energy per unit length of the sample in the $z$-direction. It is then easily seen that 
\begin{align*}
    \int_{\R^2} |\mathbf{h}_\bbm|^2 \,d{\bf{x}} = \|\dive \bbm\|_{H^{-1}(\R^2)}^2. 
\end{align*}

\noindent 


\subsubsection*{Parameter regime and derivation of the functional \eqref{fullenergy}}
A primary motivation for our project is the fascinating two-scale microstructure in Galfenol \cite{chopra2015non}; the authors there refer to this pattern as the \textit{zig-zag Landau state}. The magnetic microstructure in Galfenol is in striking contrast to known traditional soft ferromagnets such as Permalloy, that exhibit the so-called ``\textit{normal Landau state}''; refer to Figure 1. { The normal Landau state has rectangular boundary made up of straight lines while the zig-zag Landau state  has a boundary with corrugated or zig-zag lines. We refer the reader to section \ref{sec:upbnd} for a detailed description of the magnetization in the zig-zag Landau state.} 

Our point of view in \cite{JNLS2018} and the present paper is to explain this complex microstructure as the result of the competition between magnetostriction energy, which prefers high frequency oscillations in the magnetization, and the small yet nonzero wall energy, which favors relatively few domain walls. Indeed, the magnetostrictive strains in Galfenol ($\approx 10^{-4}$) are much larger than traditional ferromagnets ($\approx 10^{-6}$). Furthermore, the large magnetostriction energy coefficient in Galfenol is comparable to the anisotropy energy coefficient, i.e. $c_{44} \lambda_{111}^2 \approx {\textit K_{a}}\approx  10^{3}$. In contrast, in Permalloy, the magnetostriction energy coefficient is much smaller than the anisotropy energy coefficient, i.e. $c_{44} \lambda_{111}^2 \approx 10^{-1} << {{\textit K_{a}}} \approx 10^{2}$. 

 In \cite{JNLS2018} we constructed an upper bound for the micromagnetic energy based on a zig-zag Landau state construction. The construction reported there was an interpretation of the micrographs from \cite{chopra2015non}. The goal of our paper is to prove a matching ansatz-free lower bound. For clarity, we work within a parameter regime of a soft ferromagnet in which magnetostriction is strongly coupled with anisotropy. In terms of physical units, we assume $0 < A \ll 1, c_{44} \lambda_{111}^2 \approx K_d \ll K_a.$ Furthermore, we suppose that the sample cross-section is given by the square $\left(-\frac{L}{2},\frac{L}{2}\right)^2.$ Rescaling the domain by the characteristic length $L$, we arrive at a functional defined on the unit square 
 \begin{align*}
     G := \left( -\frac{1}{2}, \frac{1}{2} \right)^2.
 \end{align*}
 Non-dimensionalizing the energy by dividing through by $c_{44}\lambda_{111}^2 L^2 ,$ and defining the (non-dimensional) positive numbers $\mu, \eta,$ via 
 \begin{align*}
         \mathcal{F}_\eta(v) &:= \frac{1}{c_{44} \lambda_{111}^2 L^2 } \mathbb{F}(v), \hspace{1cm}
         &\mu \eta := \frac{A}{c_{44}\lambda_{111}^2 } , \\
         \frac{\mu}{\eta} &:= \frac{K_a}{c_{44} \lambda_{111}^2}, \hspace{1cm}
 \end{align*}
 we arrive at the energy \eqref{fullenergy}. Here, $\mu$ plays the role of a non-dimensional surface tension, refer to \eqref{Aandgamma}, and $\eta$ a non-dimensional diffuse wall-thickness.

\section{Preliminaries} \label{sec:prelim}
\subsection{On the magnetostriction energy}
We recall the following version of Korn's inequality (refer to \cite{Nitsche}) that we will use to show that for any magnetization $\bbm,$ one has a displacement $\bbu$ that achieves the infimum in \eqref{magfinal}. 
\bthm \label{korn}
Let $\Omega \subset \R^n$ denote a bounded, open set with Lipschitz boundary. There exists a constant $C(n,\Omega)$ such that 
\begin{align}
    \|\nabla u\|_{H^1(\Omega)} \leqslant C \left\|\frac{\nabla u + \nabla u^T}{2} \right\|_{L^2(\Omega)} ,
\end{align}
for all $u \in H^1(\Omega;\R^n)$ such that 
\begin{enumerate}
    \item for $i \in \{1,\cdots, n\},$ we have $\int_\Omega u_i \,d{\bf{x}} = 0, $
    \item the matrix $a_{ij} := \left[\int_\Omega \nabla_i u^j \,d{\bf{x}} \right]$ is symmetric.
\end{enumerate}
\ethm
\noindent Using this theorem, concerning the variational problem in \eqref{magfinal} we prove
\bthm \label{directmethod}
Let $\bbm \in L^2(\Omega).$ Then there exists $\bbu_0 \in H^1(\Omega)$ with $\int_\Omega \bbu_0^i\,d{\bf{x}} = 0$ and $\left[ \int_\Omega \nabla_i \bbu_0^j \,d{\bf{x}}\right]_{ij}$ symmetric, such that 
\begin{align} \label{magstricrhs}
    \int_\Omega \|\epsilon(\bbu_0) - \epsilon_0(\bbm)\|^2 \,d{\bf{x}} = \inf_{\bbu \in H^1(\Omega;\R^2)} {\int_\Omega} \|\epsilon(\bbu) - \epsilon_0(\bbm)\|^2 \,d{\bf{x}}
\end{align}
\ethm
\begin{proof}
The proof is an easy application of the direct method in the Calculus of Variations, and we outline it. For ease of notation, set $V := e_0(\bbm)$ and note that $\|V\|_{L^2(\Omega)} \leqslant C.$ Let $\{\bbu_j\} \subset H^1(\Omega;\R^2)$ denote a minimizing sequence for the variational problem in \eqref{magstricrhs}. Since the energy on the right hand side of \eqref{magstricrhs} does not change upon adding constants and infinitesimal rotations, we may assume that for each $j \in \mathbb{N},$ one can
\begin{enumerate}
    \item add an appropriate constant to each $\bbu_j$ to arrange $\int_\Omega (\bbu_j)_i \,d{\bf{x}} = 0,$ for $i \in \{1, \cdots N\},$
    \item add an appropriate infinitesimal rotation $W_j \mathbf{x}$ to $\bbu_j$, with $W_j$ skew symmetric, so that for each $j \in \mathbb{N},$ we can arrange that the matrix $c_{ik}^j :=\left[\int_\Omega \nabla_i (\bbu_j)^k \,d {\bf{x}}  \right]$ is symmetric: that is $c_{ik}^j = c_{ki}^j$ for each $j \in \mathbb{N}$, and for all $i,k \in \{1,2\}.$ 
    \end{enumerate}
    These operations do not change the energy in \eqref{magstricrhs} of the functions $\bbu_j$. 
Denoting by $\mathfrak{m}$ the $\inf$ on the right hand side of \eqref{magstricrhs}, one easily obtains by Korn's inequality, refer to Theorem \ref{korn} that for all $j$ sufficiently large, 
\begin{align*}
    \| \bbu_j\|_{H^1(\Omega)}^2  \leqslant C \left( \|\epsilon(\bbu_j) - V\|^2 + 1\right) \leqslant C(\mathfrak{m} + 2). 
\end{align*}
The result follows by usual compactness and weak-lower semicontinuity theorems. 
\end{proof}
\noindent In the next lemma, we obtain a Fourier representation for the magnetostriction energy in the special case that 
\begin{align*}
    \Omega = G := \left( - \frac{1}{2}, \frac{1}{2} \right)^2.
\end{align*}
For the remainder of the paper, it is this cross-section that we will work with. 
\begin{lemma} \label{lemfourrep}
Let $V  \in L^2(G;\R^{2\times2}).$ Then 
\begin{align} \label{Fourierrep}
    \inf_{{\bf{u}} \in H^1(G;\R^2)} \int_G \|\epsilon({\bf{u}}) - V\|^2 \,d{\bf{x}} =  \sum_{{\bf{k}} \in \mathbb{Z}^2 \backslash \{0\}} \frac{1}{|{\bf{k}}|^4}\left(|{\bf{k}}|^4\|\widehat{V}({\bf{k}})\|^2 - 2 |{\bf{k}}|^2 |\widehat{V}({\bf{k}}){\bf{k}}|^2 + |{\bf{k}} \cdot \widehat{V}({\bf{k}}){\bf{k}}|^2  \right) ,
\end{align}
with 
\begin{align*}
    \widehat{V}({\bf{k}}) := \int_G V({\bf{x}}) e^{-2\pi i {\bf{k}} \cdot {\bf{x}}} \,d{\bf{x}}.
\end{align*}
\end{lemma}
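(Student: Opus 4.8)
The plan is to realize the infimum as a squared $L^2$-distance and then diagonalize it in the Fourier basis of $G$. First I would record that, by Theorem~\ref{directmethod}, the infimum is attained at some $\mathbf{u}_0$, and that $\mathcal{E}:=\{\epsilon(\mathbf{u}):\mathbf{u}\in H^1(G;\R^2)\}$ is a \emph{closed} subspace of $L^2(G;\R^{2\times 2})$; closedness is exactly Korn's inequality (Theorem~\ref{korn}), since after normalizing away rigid motions one controls $\|\mathbf{u}\|_{H^1}$ by $\|\epsilon(\mathbf{u})\|_{L^2}$. Hence the left-hand side equals $\|S\|_{L^2(G)}^2$ with $S:=V-\epsilon(\mathbf{u}_0)$ the orthogonal residual. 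The Euler--Lagrange relation $S\perp\mathcal{E}$, upon integrating by parts against all $\mathbf{u}\in H^1(G)$, says precisely that the symmetric part of $S$ is divergence-free in $G$ and traction-free on $\partial G$: $\dive S=0$ in $G$ and $S\mathbf{n}=0$ on $\partial G$. (I will treat $V$ symmetric, which is the relevant case $V=\epsilon_0(\mathbf{m})$.)

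Next I would pass to Fourier series on $G$. For every $W\in L^2(G;\R^{2\times 2})$ one has Parseval, $\int_G\|W\|^2=\sum_{\mathbf{k}\in\mathbb{Z}^2}\|\widehat{W}(\mathbf{k})\|^2$, and a short linear-algebra computation shows that, mode by mode, the symmetric matrices $\mathbf{a}\otimes\mathbf{k}+\mathbf{k}\otimes\mathbf{a}$ sweep out exactly the orthogonal complement of $\mathbf{k}^\perp\otimes\mathbf{k}^\perp$ inside the symmetric $2\times2$ matrices. Thus the part of a symmetric $W$ not reachable by symmetrized gradients at frequency $\mathbf{k}$ is its projection onto $\mathbf{k}^\perp\otimes\mathbf{k}^\perp$, of squared length $|(\mathbf{k}^\perp)^\top W\mathbf{k}^\perp|^2/|\mathbf{k}|^4$; expanding $(\mathbf{k}^\perp)^\top W\mathbf{k}^\perp$ in an orthonormal frame adapted to $\mathbf{k}$ rewrites this as the stated summand $\tfrac{1}{|\mathbf{k}|^4}\big(|\mathbf{k}|^4\|\widehat V(\mathbf{k})\|^2-2|\mathbf{k}|^2|\widehat V(\mathbf{k})\mathbf{k}|^2+|\mathbf{k}\cdot\widehat V(\mathbf{k})\mathbf{k}|^2\big)$. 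This already gives the inequality ``$\leqslant$'': choosing $\mathbf{u}=A\mathbf{x}+\mathbf{w}$ with $A=\widehat V(0)$ (constant, admissible since $\widehat V(0)$ is symmetric) to kill the zero mode, and $\mathbf{w}$ a periodic field realizing the mode-wise projection for each $\mathbf{k}\neq0$, yields a competitor whose energy equals the right-hand side exactly.

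For the matching lower bound I would use the support of $S$. Extending $S$ by zero to $\widetilde S$ on $\R^2$, the traction-free condition guarantees that no surface distribution is created across $\partial G$, so $\dive\widetilde S=0$ in $\mathcal{D}'(\R^2)$; equivalently $\widehat{\widetilde S}$ is entire with $\widehat{\widetilde S}(\xi)\xi=0$ for all $\xi$. Restricting to integer frequencies gives $\widehat S(\mathbf{k})\mathbf{k}=0$, so for $\mathbf{k}\neq0$ the symmetric matrix $\widehat S(\mathbf{k})$ must equal $c_{\mathbf{k}}\,\mathbf{k}^\perp\otimes\mathbf{k}^\perp$; moreover $\widehat S(0)=\int_G S=0$ because constant symmetric matrices lie in $\mathcal{E}$ while $S\perp\mathcal{E}$. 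Parseval then gives $\|S\|^2=\sum_{\mathbf{k}\neq0}|c_{\mathbf{k}}|^2|\mathbf{k}|^4$, and it remains only to identify $c_{\mathbf{k}}|\mathbf{k}|^4=(\mathbf{k}^\perp)^\top\widehat S(\mathbf{k})\mathbf{k}^\perp$ with $(\mathbf{k}^\perp)^\top\widehat V(\mathbf{k})\mathbf{k}^\perp$, i.e. with the same summand obtained above.

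The step I expect to be the crux is precisely this last identification, namely showing that $\epsilon(\mathbf{u}_0)$ carries no weight in the $\mathbf{k}^\perp\otimes\mathbf{k}^\perp$ direction at integer frequencies, $(\mathbf{k}^\perp)^\top\widehat{\epsilon(\mathbf{u}_0)}(\mathbf{k})\mathbf{k}^\perp=0$. Using $(\mathbf{k}^\perp)^\top\epsilon(\mathbf{u}_0)\mathbf{k}^\perp=(\mathbf{k}^\perp\!\cdot\!\nabla)(\mathbf{u}_0\cdot\mathbf{k}^\perp)$ and integrating by parts along the constant direction $\mathbf{k}^\perp$ --- along which $e^{-2\pi i\mathbf{k}\cdot\mathbf{x}}$ is constant because $\mathbf{k}^\perp\cdot\mathbf{k}=0$ --- collapses this integral to a boundary term over $\partial G$. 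That boundary term is the heart of the matter: it need not vanish for an arbitrary displacement, so the reconciliation of the free (Neumann) minimization on $G$ with the clean periodic Fourier sum is exactly where the divergence-free and traction-free structure of the residual, the Paley--Wiener/Plancherel--Polya apparatus, and the special algebraic structure of the admissible $V=\epsilon_0(\mathbf{m})$ (with the cancellations it produces) must be brought together. Everything else in the argument is routine bookkeeping once this cancellation is secured.
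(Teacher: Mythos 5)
Your reduction to an orthogonal projection and the mode-by-mode linear algebra are correct: for symmetric $W$ and $\mathbf{k}\neq 0$ the quantity $\|W\|^2-2|W\mathbf{k}|^2/|\mathbf{k}|^2+|\mathbf{k}\cdot W\mathbf{k}|^2/|\mathbf{k}|^4$ is indeed $|(\mathbf{k}^\perp)^{T}W\mathbf{k}^\perp|^2/|\mathbf{k}|^4$, and your periodic-plus-affine competitor gives the inequality ``$\leqslant$'' in \eqref{Fourierrep}. But the proof of the reverse inequality is not complete, and you have located the missing step yourself: you need $(\mathbf{k}^\perp)^{T}\widehat{\epsilon(\mathbf{u}_0)}(\mathbf{k})\,\mathbf{k}^\perp=0$ for all integer $\mathbf{k}\neq 0$, and the integration by parts along $\mathbf{k}^\perp$ leaves a boundary integral over $\partial G$ that does not vanish for a general (non-periodic) $H^1$ displacement. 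Declaring this to be ``the heart of the matter'' and gesturing at Paley--Wiener and at the special structure of $V=\epsilon_0(\mathbf{m})$ does not close it; in particular the lemma is stated for \emph{arbitrary} $V\in L^2(G;\R^{2\times 2})$, so no algebraic cancellation coming from the magnetization can be invoked here --- the cancellation has to come from the structure of the minimization problem itself. As written, your argument proves only ``$\inf\leqslant$ RHS'' together with ``$\inf=\|S\|_{L^2}^2=\sum_{\mathbf{k}\neq 0}|c_{\mathbf{k}}|^2|\mathbf{k}|^4$'', without identifying $c_{\mathbf{k}}|\mathbf{k}|^4$ with $(\mathbf{k}^\perp)^{T}\widehat{V}(\mathbf{k})\mathbf{k}^\perp$.

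The paper closes exactly this gap by the method of images rather than by working on $G$ directly. One reflects $V$ and the minimizer $\mathbf{u}_0$ evenly (with the appropriate componentwise signs so that symmetrized gradients reflect consistently) across the sides of $G$ onto the doubled square $G^*=(-\tfrac12,\tfrac32)^2$, obtaining a $G^*$-periodic field $\mathbf{u}_0^*$. The natural (traction-free) boundary condition $(\epsilon(\mathbf{u}_0)-V)\nu=0$ on $\partial G$ is precisely what guarantees that no traction jump is created across the reflection lines, so $\mathbf{u}_0^*$ satisfies the weak Euler--Lagrange equations of the \emph{periodic} problem on $G^*$ and, by convexity, is its minimizer. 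For genuinely periodic fields one has $\widehat{\epsilon(\mathbf{u})}(\mathbf{k})=2\pi i\,\mathrm{sym}(\widehat{\mathbf{u}}(\mathbf{k})\otimes\mathbf{k})$ with no boundary terms, and then $(\mathbf{k}^\perp)^{T}\mathrm{sym}(\widehat{\mathbf{u}}\otimes\mathbf{k})\mathbf{k}^\perp=(\widehat{\mathbf{u}}\cdot\mathbf{k}^\perp)(\mathbf{k}\cdot\mathbf{k}^\perp)=0$ identically --- the cancellation you are missing is automatic in the periodic setting, and the Fourier representation follows from the standard periodic computation (the paper cites Kn\"upfer et al.\ for this last step). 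So the skeleton of your argument (orthogonal residual, mode-wise projection onto $\mathbf{k}^\perp\otimes\mathbf{k}^\perp$) is the right one, but to make it rigorous you must either carry out the reflection argument or otherwise prove that the Neumann minimizer produces no spectral weight in the $\mathbf{k}^\perp\otimes\mathbf{k}^\perp$ direction at integer frequencies; neither is routine bookkeeping.
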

\begin{proof}
Let $V$ be as in the Lemma, and let $u_0$ denote the minimizer obtained from Theorem \ref{directmethod}. We know that $u_0 \in H^1(G)$ are weak solutions of the Euler-Lagrange equations given by
\begin{align*}
    \dive(\epsilon(u_0) - V) &= 0, \hspace{1cm} {\bf{x}} \in G, \\
    \left(\epsilon(u_0) - V\right)\nu &= 0, \hspace{1cm} {\bf{x}} \in \partial G \backslash C
\end{align*}
with $C$ denoting the corners of the domain $G.$ Consider now the larger square $G^* := (-\frac{1}{2}, \frac{3}{2})\times (-\frac{1}{2}, \frac{3}{2}).$ 
We define $V^*$ on $G^*$ as follows: first, define $V^* = V$ on $G \subset G^*.$ On the square $(\frac{1}{2}, \frac{3}{2}) \times (-\frac{1}{2},\frac{1}{2})$ we define $V^*$ be performing an even reflection of $V$ in the $x-$variable about the side $\{x = \frac{1}{2}\} \cap \overline{G}.$ Finally, we define $V^*$ on the rectangle $(-\frac{1}{2},\frac{3}{2}) \times (\frac{1}{2}, \frac{3}{2})$ by an even reflection in the $y-$variable of $V^*$ defined thus far, about the line $\{y = \frac{1}{2}\} \cap G^*.$ 
\noindent We denote by $u_0^*$ the result of performing the foregoing reflection procedure to $u_0.$ It is clear, thanks to the even reflection that $u_0^* \in H^1(G^*),$ and is $G^*-$periodic. 
We now consider the variational problem 
\begin{align} \label{periodic}
    \inf_{w \in H^1_\#(G^*;\R^2)} \int_{G^*} \|\epsilon(w) - V^*\|^2 \,d{\bf{x}} ,
\end{align}
where $H^1_\#(G^*;\R^2)$ consists of $G^*-$periodic $H^1$ vector fields in $\R^2.$ We note that up to addition of constants and infinitesimal rotations, this problem has a unique minimizer. We claim that $u_0^* \in H^1_\#(G^*)$ is a minimizer to this variational problem. Indeed, by convexity, it suffices to verify the weak form of the Euler-Lagrange equations. In fact, it suffices to verify the weak form of the Euler-Lagrange equations associated to \eqref{periodic} in neighborhoods of points along $\partial G^*$ (away from the corners). To this end, we let $B = B(\mathbf{x},r)$ denote a ball centered at $\mathbf{x} \in \partial G^* \backslash C$ and radius $r < 1.$ We test against functions $\phi \in C_c^\infty(B;\R^2), $ and we write $B = B_+ \cup B_-$ with $B_+ = B \cap G^*$ and $B_- = B \backslash \overline{G^*}$ . By integration by parts, we find
\begin{align*}
    \int_B \epsilon(\phi) : \left( \epsilon(u_0^*) - V^*\right) \,d{\bf{x}} &= - \int_{B_+ } \phi \cdot \dive \left( \epsilon(u_0^*) - V^*\right) \, d {\bf{x}} - \int_{B_-} \phi \cdot \dive \left( \epsilon(u_0^*) - V^*\right) \,d{\bf{x}} \\
    &+ \int_{\partial G \cap B} \phi \cdot \left(\epsilon(u_0^*) - V^* \right)_+ \nu -  \int_{\partial G \cap B} \phi \cdot \left(\epsilon(u_0^*) - V^* \right)_- \nu \\&= 0,
\end{align*}
thanks to the Euler-Lagrange equations satisfied by $u_0$, and crucially, the natural boundary conditions. Here, subscripts $\cdot_\pm$ respectively denote the traces of the periodized quantities along $\partial G^*.$ \par \noindent 
Having shown this, the Fourier representation follows as in the proof of \cite[Lemma 4.1]{knupfer2013nucleation}. 
\end{proof}
\section{Upper bound: the results of \cite{JNLS2018} and a modification} \label{sec:upbnd}
\noindent In our previous paper \cite{JNLS2018}, the energies of laminates of the normal Landau state and of the zig-zag Landau state were compared. The zig-zag Landau state refers to the magnetization pattern reported in the experiments of Chopra and Wuttig \cite{chopra2015non}, also see Figure \ref{fig: micrographs}(b) and \ref{fig: micrographs}(c), whereas, the normal Landau state is the magnetization pattern observed in more traditional cubic materials such as Permalloy, see Figure \ref{fig: micrographs}(a).

At the level of energies, comparing the two in the parameter regimes of Galfenol shows that the zig-zag Landau state is energetically favored compared to the normal Landau state. This is striking, because the zig-zag Landau state is a significantly more complex, two scale construction, as opposed to a single-scale normal Landau state laminate. We showed in \cite{JNLS2018} that the zig-zag Landau state has a coarse microstructure in regions of mechanical compatibility of the preferred strain and a fine scale microstructure near the regions of incompatibility of the preferred strain, refer to the discussion in \cite[Section 2.1 and Lemma 4.2]{JNLS2018}. 

\begin{figure}
\begin{center}
\includegraphics[scale=0.510]{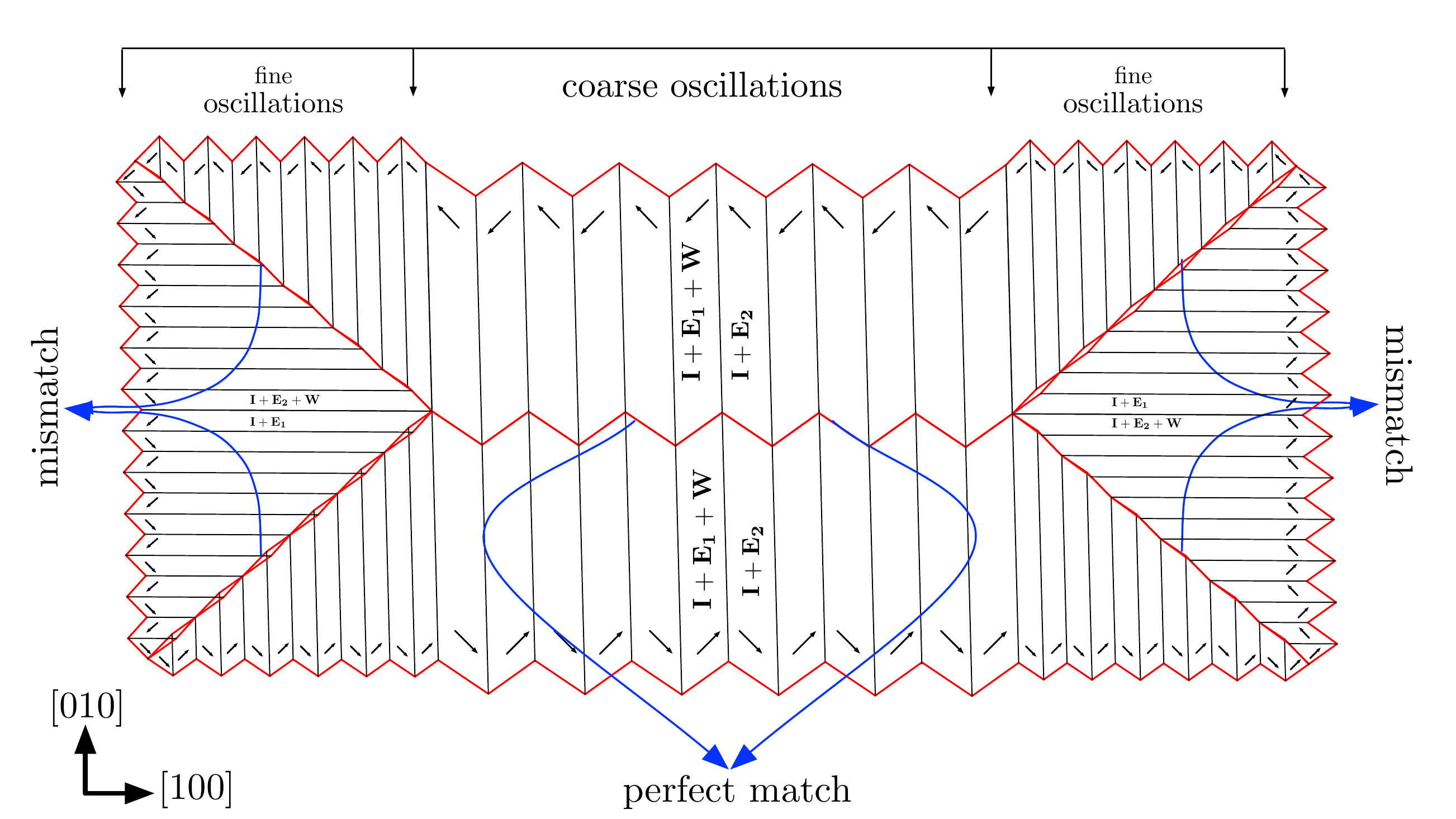}
\caption{Deformed zig-zag Landau state with no transition layer. The preferred strains: $\mathbf{E}_{1}$ and $\mathbf{E_{2}}$ and the infinitesimal rotation $\mathbf{W}$ are given in equation (56) of \cite{JNLS2018} . }
\label{fig: zig-zag_2}
\end{center}
\end{figure}

\noindent  Towards recalling this construction and presenting a different version of it, we note that the easy axes of a cubic material consists of $\left\{\left(\pm \frac{1}{\sqrt{2}}, \pm \frac{1}{\sqrt{2}}\right)\right\},$ and thus, two kinds of walls make up most of our constructions: $90^\circ$ walls, and $180^\circ$ walls. In \cite{JNLS2018}, we made a construction which was divergence free, motivated by the large $K_d$-value for Galfenol. Here, we briefly present a slight modification of that construction that is relevant for cubic ferromagnets with large and comparable magnetostriction  and magnetostatic energies and significantly larger magnetocrystalline anisotropy.

The fundamental building block of both constructions is the single zig-zag Landau state unit cell, shown in Figure \ref{fig: zig-zag_2}. Both our constructions consist in the bulk of $k \in \mathbb{N}$ single zig-zag Landau states in the sample $G$; see Figure \ref{fig: zig-zag_5}. It is easily checked that the number of $180^\circ$ walls is comparable to $k$. In regions of mechanical incompatibility, the zig-zag Landau state construction consists of a further fine-scale oscillation that predominantly makes use of {`\it{l}'} $ \, 90^\circ-$ walls. 

\noindent The difference between the constructions we presented in \cite{JNLS2018} and the modification we describe here lies in the triangular boundary domains. In the construction in \cite{JNLS2018}, these consisted of closure domains where the magnetization \textit{does not} lie along the easy axes, but is divergence free. In the modification we present in Figure \ref{fig: zig-zag_5}, the magnetization is \textit{not} divergence-free, but lies on the easy axes. We have highlighted the magnetization in four representative boundary triangles in Figure \ref{fig: zig-zag_5}.

\noindent This magnetization pattern $\mathbf{m}$ is shown in Figure \ref{fig: zig-zag_5}, where $k=2$. In this construction $\mathbf{m} \in \{\pm \bbm_1, \pm \bbm_2\} $, and so this construction has zero anisotropy energy.  

\noindent Aside from the boundary triangles described above, the modification in \ref{fig: zig-zag_5} is identical to the constructions in \cite{JNLS2018}:  each zig-zag Landau state is a second order laminate consisting of two distinct scales of oscillation frequencies, a coarse scale oscillation of frequency $k\sim\frac{L^{\frac{1}{3}} (c_{44}{\lambda^{2}_{111}} + \textit K_{d})^{\frac{1}{3}}}{\gamma^{\frac{1}{3}}}$ and a fine scale oscillation of frequency $lk\sim\frac{L^{\frac{2}{3}} (c_{44}{\lambda^{2}_{111}} + \textit K_{d})^{\frac{2}{3}}}{\gamma^{\frac{2}{3}}}$

\noindent Calculating the energies of the both constructions is identical with the exception that the present construction also has a magnetostatic contribution. We remind the reader that in \cite{JNLS2018} we worked with the sharp interface energy, which prior to non-dimensionalizing reads
\begin{align}
    \mathbb{F}_\#(\bbm) = \mu L \int_G |\nabla \bbm| + K_a L^2\int_G \varphi(\bbm) \, d {\bf{x}} + K_d L^2 \int_{\R^2}|\mathbf{h}_\bbm|^2 \,d {\bf{x}} + c_{44} \lambda_{111}^2 e_{mag}(\bbm),
\end{align}
with competitors that satisfied $\bbm \in BV(G;\R^2).$ Estimating the magnetostriction energy of this magnetization proceeds identically to \cite{JNLS2018}: for a detailed description of the magnetization, and the deformation gradients in the sample $G$ away from the boundary triangles which remain unchanged for the present construction, we refer the reader to \cite[Section 4.3]{JNLS2018}. 

\begin{figure}
\begin{center}
\includegraphics[scale=0.15]{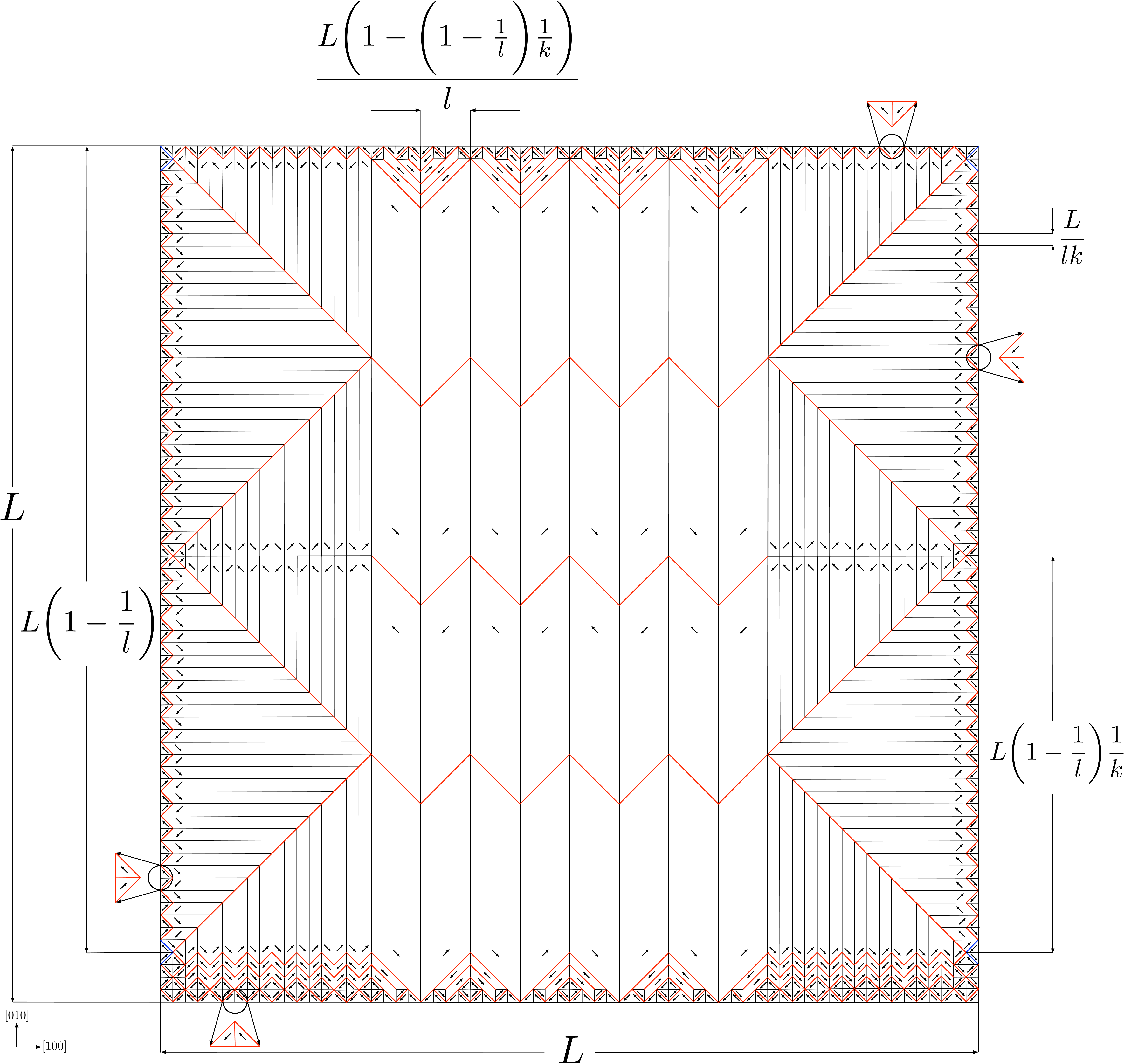}
\caption{Magnetization in $(-\frac{L}{2},\frac{L}{2})\times(-\frac{L}{2},\frac{L}{2})$ square consisting of $k  (=2)$ zig-zag Landau states for cubic ferromagnet with large and comparable magnetostriction and magnetostatic energies. Note that the magnetization in the boundary triangles in not divergence free.}
\label{fig: zig-zag_5}
\end{center}
\end{figure}
\noindent
It remains to estimate the magnetostatic energy of our construction in Figure \ref{fig: zig-zag_5}. We make use of 
\begin{lemma} \label{prop:demag} Let $\bbm \in L^2(G;\R^2)$ be a magnetization pattern, and let $\mathbf{h}_{\bbm} \in L^2(\R^2,\R^2)$ denote the corresponding induced magnetic field that satisfies Maxwell's equations of magnetostatics (\ref{maxwella}, \ref{maxwellb}) in the sense of distributions. Then
\item 
\begin{align*}
\int_{\R^2} |\mathbf{h}_\bbm|^2 \,d {\bf{x}} \leqslant \int_G |\bbm|^2\,d{\bf{x}}. 
\end{align*}
In fact,
\begin{align*}
\int_{\R^2} |\mathbf{h}_\bbm|^2 \,d {\bf{x}} = \min_{ \mathbf{n} \in \mathcal{B} } \int_{\R^2} |\mathbf{n}|^2 \,d{\bf{x}},
\end{align*}
with 
\begin{align*}
    \mathcal{B} := \{\mathbf{n} \in L^2(\R^2; \R^2) : \int_{\R^2} ( \mathbf{n} + \bbm) \cdot \nabla \psi \, d {\bf{x}} = 0 \mbox{ for every } \psi \in H^1(\R^2) \}.
\end{align*}
\end{lemma}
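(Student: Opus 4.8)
The plan is to recognize the statement as a Helmholtz-type projection result, prove the minimization characterization first, and then read off the inequality as an immediate consequence. Observe that the constraint defining $\mathcal{B}$ is exactly the weak (distributional) equation $\dive(\mathbf{n}+\bbm)=0$ on $\R^2$. Hence $\mathcal{B}$ is a closed affine subspace of the Hilbert space $L^2(\R^2;\R^2)$: it is nonempty since $\mathbf{n}=-\bbm$ trivially satisfies the constraint, and its underlying linear space is
\[
\mathcal{B}_0 := \Big\{ \mathbf{w}\in L^2(\R^2;\R^2) : \int_{\R^2}\mathbf{w}\cdot\nabla\psi\,d\mathbf{x}=0 \ \text{ for all }\psi\in H^1(\R^2)\Big\},
\]
the space of weakly divergence-free fields. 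Consequently $\min_{\mathbf{n}\in\mathcal{B}}\int_{\R^2}|\mathbf{n}|^2\,d\mathbf{x}$ is attained at the unique orthogonal projection $\mathbf{n}^*$ of the origin onto $\mathcal{B}$, characterized by the orthogonality relations $\langle\mathbf{n}^*,\mathbf{w}\rangle_{L^2}=0$ for every $\mathbf{w}\in\mathcal{B}_0$.

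Next I would identify $\mathbf{n}^*$ with $\mathbf{h}_\bbm$. By \eqref{maxwella} the field $\mathbf{h}_\bbm$ lies in $\mathcal{B}$, so it suffices to check the orthogonality $\langle\mathbf{h}_\bbm,\mathbf{w}\rangle_{L^2}=0$ for all $\mathbf{w}\in\mathcal{B}_0$ and then invoke uniqueness of the projection. This is precisely the $L^2(\R^2)$-orthogonality of curl-free and divergence-free fields, and it is cleanest to verify it on the Fourier side: the curl-free condition \eqref{maxwellb} forces $\widehat{\mathbf{h}_\bbm}(\xi)$ to be parallel to $\xi$ for a.e.\ $\xi$, while $\mathbf{w}\in\mathcal{B}_0$ forces $\xi\cdot\widehat{\mathbf{w}}(\xi)=0$; hence the integrand $\widehat{\mathbf{h}_\bbm}(\xi)\cdot\overline{\widehat{\mathbf{w}}(\xi)}$ vanishes pointwise, and Plancherel gives $\langle\mathbf{h}_\bbm,\mathbf{w}\rangle_{L^2}=0$. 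Equivalently, one may solve the two Fourier constraints directly to obtain $\widehat{\mathbf{h}_\bbm}(\xi) = -|\xi|^{-2}\big(\xi\cdot\widehat{\bbm}(\xi)\big)\,\xi$ and minimize $|\widehat{\mathbf{n}}(\xi)|^2$ pointwise under the scalar constraint $\xi\cdot\widehat{\mathbf{n}}(\xi)=-\xi\cdot\widehat{\bbm}(\xi)$, whose minimizer is the component of $-\widehat{\bbm}$ along $\xi$, i.e.\ $\widehat{\mathbf{h}_\bbm}$. Either way this establishes $\int_{\R^2}|\mathbf{h}_\bbm|^2\,d\mathbf{x} = \min_{\mathbf{n}\in\mathcal{B}}\int_{\R^2}|\mathbf{n}|^2\,d\mathbf{x}$.

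The claimed inequality then follows at once by using the admissible competitor $\mathbf{n}=-\bbm\in\mathcal{B}$: since $\bbm$ is supported in $G$,
\[
\int_{\R^2}|\mathbf{h}_\bbm|^2\,d\mathbf{x} = \min_{\mathbf{n}\in\mathcal{B}}\int_{\R^2}|\mathbf{n}|^2\,d\mathbf{x} \leq \int_{\R^2}|{-\bbm}|^2\,d\mathbf{x} = \int_G |\bbm|^2\,d\mathbf{x}.
\]

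I expect the only genuine subtlety to be functional-analytic bookkeeping on the unbounded domain $\R^2$: namely that $\mathbf{h}_\bbm$ really exists in $L^2(\R^2;\R^2)$ and that the orthogonality of gradients and divergence-free fields holds without stray boundary contributions at infinity. In two dimensions a direct integration by parts is awkward because a potential $\phi$ with $\mathbf{h}_\bbm=\nabla\phi$ solving $\Delta\phi=\dive\bbm$ may grow logarithmically; the Fourier/Plancherel argument circumvents this entirely, requiring only that $\dive\bbm\in H^{-1}(\R^2)$ — which holds since $\bbm\in L^2$ is compactly supported — to guarantee $\widehat{\mathbf{h}_\bbm}=-|\xi|^{-2}(\xi\cdot\widehat{\bbm})\,\xi\in L^2$. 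Everything else is the standard closed-convex-projection machinery.
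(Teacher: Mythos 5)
Your proposal is correct, and it rests on the same underlying idea as the paper's proof --- $\mathbf{h}_\bbm$ is the orthogonal projection of the origin onto the closed affine subspace $\mathcal{B}$ --- but the decisive orthogonality is verified by a different route. The paper completes the square in physical space: it writes $\mathbf{h}_\bbm = -\nabla\chi$, expands $\int|\mathbf{n}|^2 = \int|\mathbf{n}-\mathbf{h}_\bbm|^2 + \int|\mathbf{h}_\bbm|^2 + 2\int\langle\mathbf{n}-\mathbf{h}_\bbm,\mathbf{h}_\bbm\rangle$, and kills the cross term by rewriting it as $-2\int\langle\mathbf{n}+\bbm,\nabla\chi\rangle$ (using \eqref{maxwella}) and then invoking the definition of $\mathcal{B}$ with $\psi=\chi$. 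You instead check $\langle\mathbf{h}_\bbm,\mathbf{w}\rangle_{L^2}=0$ for $\mathbf{w}$ in the tangent space $\mathcal{B}_0$ on the Fourier side, using that \eqref{maxwellb} forces $\widehat{\mathbf{h}_\bbm}(\xi)\parallel\xi$ while the constraint forces $\xi\cdot\widehat{\mathbf{w}}(\xi)=0$. Your version buys a genuine technical advantage in two dimensions, which you correctly flag: the potential $\chi$ solving $\Delta\chi=\dive\tilde{\bbm}$ satisfies $\nabla\chi\in L^2(\R^2)$ but in general $\chi\notin L^2(\R^2)$ (its Fourier transform behaves like $|\xi|^{-1}\widehat{\bbm}(0)$ near the origin), so taking $\psi=\chi$ in the definition of $\mathcal{B}$, as the paper does when it asserts $\chi\in H^1(\R^2)$, strictly requires a truncation or density argument that the paper elides; your Plancherel computation needs no potential at all. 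Both arguments then deduce the stated inequality from the admissible competitor $\mathbf{n}=-\bbm$, exactly as you do.
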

\begin{proof}
Let $\mathbf{h}_{\bbm} = -\nabla \chi$ where $\chi \in H^1(\R^2,\R^2)$. The short proof of this lemma is, for any $\mathbf{n} \in \mathcal{B}, $ we have  
\begin{align*}
    \int_{\R^2} |\mathbf{n}|^2 \, d {\bf{x}} &= \int_{\R^2} |\mathbf{n} - \mathbf{h}_{\bbm}|^2 + |\mathbf{h}_\bbm|^2 + 2 \langle (\mathbf{n} - \mathbf{h}_\bbm ), \mathbf{h}_\bbm \rangle \, d {\bf{x}} \\
    & \geqslant \int_{\R^2}|\mathbf{h}_\bbm|^2 - 2 \int_{\R^2} \langle \mathbf{n} + \bbm, \nabla \chi \rangle \\
    & = \int_{\R^2}|\mathbf{h}_\bbm|^2 \, d {\bf{x}},
\end{align*}
where in the second-to-last line, we have used Maxwell's equations and the fact that $\mathbf{n} \in \mathcal{B}.$ We note that we have equality if and only if $\mathbf{n} = \mathbf{h}_{\bbm}.$
\end{proof}

\noindent Observe that the above lemma does \textit{not} require that the test vector field $\mathbf{n}$ has support equal to that of $\bbm$; in fact, the vector field $\mathbf{n}$ is not even required to be $S^1$-valued in the domain.  We choose the test function $\mathbf{n}$ as follows: $\mathbf{n} = -\mathbf{m}$ on the boundary triangles and zero elsewhere, so that $\mathbf{n}$ is supported on the boundary triangles. Since $\dive \mathbf{n} = -\dive  \bbm$ in the sense of distributions on $\R^2$, by Lemma \ref{prop:demag}, we have
\begin{align*}
K_d \int_{\R^2} |\mathbf{h}_\bbm|^2 \,d {\bf{x}} \leq K_d \int_{\R^2} |\mathbf{n}|^2 \,d{\bf{x}}= K_d \int_{\text{bdry. triangles}} |\mathbf{n}|^2 \,d{\bf{x}} \sim  K_d \times \frac{L^{2}}{lk}
\end{align*} 
\noindent Hence, arguing as in \cite{JNLS2018}, the total sharp interface micromagnetic energy $\mathbb{F}_\#(\bbm)$ has three contributions, estimated by 
\begin{align}
\mathbb{F}_\#(\bbm) \lesssim \underbrace{\gamma L k}_\text{$180^{\circ}$ degree wall energy}+\underbrace{\gamma L  l}_\text{$90^{\circ}$ degree wall energy} + \underbrace{\bigg\{ c_{44}\lambda_{111}^2 + K_d  \bigg\} \times\frac{L^2}{lk}.}_\text{magnetostriction and magnetostatic energy}
\label{zigzagenergy1}
\end{align} 
\noindent Optimizing equation (\ref{zigzagenergy1}) with respect to  {\it{l}} and {\it{k}} we obtain our upper bound for a cubic ferromagnet with large and comparable magnetostriction and magnetostatic energies, both of which are  dominated by the anisotropy energy. Returning to our non-dimensional units and by standard facts about the Modica-Mortola $\eta \to 0$ asymptotics, the upper bound stated in Theorem \ref{mt} follows. \par

\section{Proof of the lower bound} \label{sec:lbnd}
The goal of this section is to prove Theorem \ref{mt}. We explain a reduction first. For any $\eta > 0,$ let $v_\eta \in H^1(G;\R^2)$ denote a minimizer of the energy $\mathcal{F}_\eta.$ The existence of such a minimizer follows by an easy application of the direct method in the Calculus of variations. By the upper bound construction, $\lim_{\eta \to 0} F_\eta(v_\eta) \lesssim \mu^{2/3}$ whenever $\mu < 1.$  By \cite{sternberg86}, after passing to a sub-sequence that is not denoted, $v_\eta \to \bbm$ strongly in $L^2(G;\R^2)$ where $|\bbm| = 1$ almost everywhere in $G$ and $\bbm \in BV(G;\R^2).$ Furthermore, thanks to the bound on the magnetocrystalline anisotropy, we in fact have $\bbm \in \left\{ \left( \pm \frac{1}{\sqrt{2}}, \pm \frac{1}{\sqrt{2}} \right)\right\} =\mathbb{K}$ almost everywhere in $G.$ In short, these entail that $\bbm \in \mathcal{M}.$   \par 
Let $\bbu_\eta$ denote the displacement associated to $v_\eta$, guaranteed by Theorem \ref{directmethod}. It follows then that $\bbu_\eta {\rightharpoonup} \bbu$  in $H^1(G;\R^2)$ where $\bbu$ is the displacement associated to $\bbm$; furthermore, we have 
\begin{align}
    \int_G \|\epsilon (\bbu) - \epsilon_0(\bbm)\|^2 \, d {\bf{x}} \leqslant \lim_{\eta \to 0} \int_G\|\epsilon(\bbu_\eta)  - \epsilon_0(v_\eta)\|^2 \, d {\bf{x}}. 
\end{align}\\
Finally, by the Modica-Mortola inequality \cite{sternberg86}, and using the fact that $\bbm \in \left\{ \left( \pm \frac{1}{\sqrt{2}}, \pm \frac{1}{\sqrt{2}} \right)\right\}$ almost everywhere in $G,$ we find that 
\begin{align}
    \label{modica}
    \mu \int_G |\nabla \bbm| \lesssim \liminf_{\eta \to 0} \int_G \mu \, \eta |\nabla v_\eta|^2 + \frac{\mu}{\eta} \varphi(v_\eta) \leqslant \liminf_{\eta \to 0} F_\eta(v_\eta;G). 
\end{align}
It is for these reasons that we state the theorem in terms of the asymptotic energy $\mathcal{F}_0.$

\begin{proof}[Proof of Theorem \ref{mt}]
The proof of the upper bound follows easily from the discussion in Section \ref{sec:upbnd}, and expressing the construction there in the non-dimensional units. It remains to prove the lower bound. The proof of the lower bound theorem proceeds in several steps. \\
For the proof of the lower bound, let $\bbm \in \mathcal{M}_0.$
For the convenience of the reader, we summarize the structure of the proof:
\begin{itemize}
    \item In Step 1, we simplify the magnetostriction energy in Fourier space. The key idea is to write this energy in terms of a Fourier multiplier acting on the oscillatory function $m_1 m_2$ which is $\pm \frac{1}{2}-$valued on $G.$ Note that the quantity $m_1 m_2$ corresponds to the off-diagonal terms in the preferred strain matrix $\epsilon_0(\bbm)$, and changes sign on $G,$ while the diagonal terms of $\epsilon_0(\bbm)$ are constant and equal to $\frac{1}{2}.$
{    \item In Step 2, we initiate a contradiction argument. If the minimum energy of $\mathcal{F}_0$ scales much smaller than $\mu^{2/3}$ for small $\mu,$ we use an interpolation inequality to obtain compactness in a Besov space of functions with one-third of a derivative. The resulting strong convergence of our sequence of test magnetizations yields has sufficient regularity to prove that the limiting magnetization has zero entropy production \cite{GhirLam}.  We derive the desired contradiction by our appeal to the regularity result of \cite{JOP}. }
  
\end{itemize}
\textbf{Step 1.} The goal of this step is to simplify the magnetostriction energy. Let $\bbm \in \mathcal{M}_0.$ We note that $V = \epsilon_0(\bbm)$ is in $L^p(G;\R^{2\times 2})$ for each $p \in [1,\infty]$ and is compactly supported. Let $\bbu \in H^1(G;\R^2)$ with $\int_G \bbu \,d{\bf{x}} = 0,$ and note that this mean-zero condition is merely a choice: in the following estimates it is convenient to arrange 
\begin{align}
    \label{avg}
    \frac{\langle \nabla \bbu \rangle + \langle \nabla \bbu \rangle^T}{2} = \langle V \rangle . 
\end{align}
 Taking into account Lemma \ref{lemfourrep}-Equation \eqref{Fourierrep}, and \eqref{avg}, 
\begin{align} \label{fourbeforesimplifying}
   \int_G \|\epsilon(\bbu) - V\|^2 \, d {\bf{x}} = \sum_{{\bf{k}} \in \mathbb{Z}^2, {\bf{k}} \neq 0} \frac{1}{|{\bf{k}}|^4}\left| |{\bf{k}}|^4 \|\widehat{V}({\bf{k}})\|^2 - 2|{\bf{k}}|^2 |\widehat{V}({\bf{k}}){\bf{k}}|^2 + |{\bf{k}} \cdot \widehat{V}({\bf{k}}){\bf{k}}|^2  \right|^2,
\end{align}
with $\widehat{V}({\bf{k}})$ being defined as in Lemma \ref{lemfourrep}. Since $m_1^2 = m_2^2 = \frac{1}{2}$ in $G,$ it follows that for ${\bf{k}} \in \mathbb{Z}^2 \backslash \{0\},$ we have $\widehat{m_1^2}({\bf{k}}) = \widehat{m_2^2}({\bf{k}}) = 0.$
Towards using \eqref{fourbeforesimplifying},  for ${\bf{k}} \neq {0},$ the matrix $\widehat{V}({\bf{k}})$ takes the form 
\begin{align} \label{structure}
    \widehat{V}({\bf{k}}) = \frac{3}{2}\begin{pmatrix}
    0 & b_k\\
    b_k & 0
    \end{pmatrix}
\end{align}
with
\begin{align}
    \hspace{1cm} b_k = \widehat{m_1 m_2}({\bf{k}}). 
\end{align}
Indeed, plugging in \eqref{structure} into \eqref{fourbeforesimplifying}, we find that for each ${\bf{k}}$, since 
\begin{align*}
    |{\bf{k}}|^4 \|\widehat{V}({\bf{k}})\|^2 - 2 |\mathbf{k}|^2 |\widehat{V}({\bf{k}}){\bf{k}}|^2 = 0,
\end{align*}
one has 
\begin{align} \label{magstricfourier}
   \int_G \|\epsilon(\bbu) - \epsilon_0(\bbm)\|^2 \, d {\bf{x}} = \sum_{{\bf{k}} \in \mathbb{Z}^2, {\bf{k}} \neq 0 } \frac{1}{|{\bf{k}}|^4}\left|2b_kk_1 k_2 \right|^2
\end{align}
 For brevity we set $g:= m_1 m_2.$ Working in the periodic setting and the method of images introduced in the proof of Lemma \ref{lemfourrep}, we note that the magnetostriction energy associated to the magnetization $\bbm$ is given by 
\begin{align}
 \int_G \|\epsilon(\bbu) - \epsilon_0(\bbm)\|^2 \, d {\bf{x}} = \Big\|\frac{\partial^2 g}{\partial x \partial y}\Big\|_{H^{-2}}^2.
\end{align}
\textbf{Step 2:} Suppose by way of contradiction that there exist a sequence $\mu_j \to 0$ and $\bbm_j \in \mathcal{M}_0$ with 
\begin{align} \label{e.toolittle}
\mathcal{F}_0(\bbm_j) \leqslant \beta_j \mu_j^{2/3},
\end{align}
for some sequence of positive numbers $\beta_j \to 0^+$ as $j \to \infty.$ 
We would like to obtain compactness of the $\bbm_j.$ We first work just with the exchange and magnetostriction energies: these being local, we use the periodic extensions introduced in Lemma \ref{lemfourrep} to note that, defining $g_j := m_1^j m_2^j,$ we have from Step 1 and \eqref{e.toolittle}
\begin{align}
&\Big\| \frac{\partial^2 g_j}{\partial x\partial y }\Big\|_{H^{-2}} \leqslant \beta_j\mu_j^{2/3} \label{e.compactness 1}\\
\label{e.compactness 2}
&\int_G |\nabla g_j| \leqslant \beta_j \mu_j^{-1/3}. 
\end{align}
Since $\bbm_j \in \mathcal{M}_0$ , we have that each $g_j$ has Fourier series supported in the set $\{\mathbf{k} = (k_1,k_2): k_1 k_2 \neq 0\}.$ On this set, $\frac{|k_1k_2|}{|k|^2} \gtrsim \frac{1}{|k|}.$ \\
Since for each $j,$ $g_j$ is supported on the set $\{\mathbf{k} = (k_1,k_2): k_1 k_2 \neq 0\},$ it follows that 
\begin{align}
\|g_j\|_{\mathring{H}^{-1}}^2 = \sum_{k \neq 0} \Big(\frac{1}{|\mathbf{k}|^2}|\hat{g}_j(\mathbf{k})|^2 \Big) \lesssim \sum_k \Big( \frac{|k_1 k_2|^2}{|k|^4}|\hat{g}(\mathbf{k})|^2   \Big)^2 \leqslant \beta_j O(\mu_j^{2/3}). 
\end{align}
In the following we use estimates from and related to Besov spaces on the torus; the reader is referred to \cite[Section 3.5]{frenchbook}. Now, we note that, by definition, $\mathring{H}^{-1} = \mathring{B}^{-1}_{2,2}.$ By complex interpolation, we note that for any $\theta \in (0,1),$ defining
\begin{equation}
\begin{aligned}
s := \theta(1) + (1-\theta)(-1), \\
\frac{1}{p} := \frac{\theta}{1} + \frac{1- \theta}{2}, \\
\frac{1}{q} := \frac{\theta }{\infty} + \frac{1- \theta}{2},
\end{aligned}
\end{equation}
we have the inequality 
\begin{align} \label{e.interp}
\|g_j\|_{\mathring{B}^s_{p,q}} \lesssim \|g_j \|_{\mathring{B}^1_{1,\infty}}^\theta \|g_j\|_{\mathring{B}^{-1}_{2,2}}^{1-\theta}. 
\end{align}
Towards getting the desired compactness,  we note that $\|g_j\|_{\mathring{B}^1_{1,\infty}} \lesssim \|\nabla g_j\|_{L^1} \ll \mu_j^{-1/3}$ and invoke \eqref{e.interp} with the choice $\theta = \frac{2}{3}.$ This entails that 
\begin{align}
\|g_j \|_{\mathring{B}^{1/3}_{6/5, 6} }\leqslant \beta_j. 
\end{align}
By the compactness of the embedding into the space $\mathring{B}^{1/3}_{6/5,6} \subset L^1,$ it follows that $g_j $ converges strongly to zero in $L^1$ at rate $\beta_j.$ \\
\textbf{Step 3:} The analysis of Step 2 entails that $g_j = m_1^jm_2^j$ is Cauchy in $L^1.$ Since $\bbm_j(\mathbf{x}) \in \mathbb{K}$ for almost every $\mathbf{x} \in \mathbb{K},$ it follows that $(m_1^j)^2 + (m_2^j)^2 \pm 2 m_1^j m_2^j$ is Cauchy in $L^1,$ or equivalently, $(m_1^j \pm m_2^j)$ is Cauchy in $L^2.$ This further entails that $m_1^j, m_2^j$ are Cauchy in $L^2$, and hence are strongly convergent to a limit $\bbm = (m_1,m_2).$ By passing to a subsequence, we obtain almost everywhere convergence, and hence that $\bbm \in \mathbb{K}$ almost everywhere. 

Identifying these periodic extensions with functions on $G,$ we find that $\bbm_j \to \bbm$ strongly in $L^2(G).$ On the other hand, this further entails that $\dive \, \bbm_j \to \dive \, \bbm $ strongly in $H^{-1}(\R^2).$ But since $\dive \, \bbm_j \to 0$ in $H^{-1};$ which implies that $\dive \, \bbm = 0$ in $H^{-1}(\R^2)$. Towards conclude the argument, we will invoke a regularity result from \cite{JOP}. To this end, we will show that $\bbm$ verifies a kinetic formulation of the eikonal equation. 
{
\par \textbf{Step 4:} We recall from Step 3 that $\bbm \in B^{1/3}_{6/5,6}(G)$ with $\bbm \in \mathbb{K}$ a.e. and $\dive \, \bbm = 0$ in the sense of distributions in $\R^2.$   Now we note that since $\bbm$ has finite range, for any $t \in (0,1)$ and any $z $ with $|z| \leqslant t$ we have 
\begin{align*}
\|\Delta_z \bbm\|_{L^{6/5}} \sim \|\Delta_z \bbm\|_{L^3(G)}.
\end{align*}
In particular, it follows that 
\begin{align*}
[\bbm]_{B^{1/3}_{3,6}}  < \infty.
\end{align*}
As $|\bbm | = 1$ and $|G| < \infty$ it follows that $\bbm \in B^{1/3}_{3,6}.$ Appealing to \cite[Section 4.1]{GhirLam} then, $\nabla\cdot \Phi(\bbm) = 0$ for all entropies $\Phi,$ cf. \cite[Definition 2.1]{dkmocompactness}. We remind the reader that a function $\Phi \in C^\infty_0(\R^2)$ is said to be an entropy if 
\begin{align*}
z \cdot D\,\Phi(z)z^\perp = 0, \quad \mbox{ for all } z \in \R^2, \quad \Phi(0) = 0, \quad D\, \Phi(0) = 0.
\end{align*}
Appealing then to \cite[Theorem 1.3]{JOP} it follows that $\bbm$ admits a  Lipschitz continuous  representative on any convex subset $\omega \Subset G,$ or it is a vortex in $\omega$. As $\bbm$ is finite range it can not be a vortex on any convex subset of $G$ and hence it must be constant. This is a contradiction since $\nabla \cdot \bbm = 0$ in the sense of distributions on $\R^2.$ 
}
\end{proof}

{\subsection*{Discussion.} We conclude the paper highlighting some features and limitations of our lower bound proof. 
\begin{enumerate}
\item We made crucial use of the concept of entropies that were introduced in \cite{dkmocompactness}. They were primarily developed as a tool to prove \textit{strong} compactness in $L^p$ spaces by exploiting compensation effects arising from an \textit{asymptotically increasing} penalty to the divergence of the magnetization in $H^{-1},$ and an \textit{asymptotically fading} penalty to the exchange energy. On the other hand, as suggested by the physics, the strength of the magnetostriction and magnetostatic coefficient are asymptotically \textit{order one}, i.e., comparable, while the exchange coefficient is still fading. Both these terms are nonlocal in nature and prefer oscillations, but in a sense, are in competition with each other. Given the easy axes, the demagnetization energy prefers a simple Landau state, which is very expensive for magnetostriction. On the other hand, magnetizations that are cheap for magnetostriction, such as constants or those that only use a pair of antipodal magnetizations, are very expensive for the demagnetization energy.  
\item For this reason above, we have not been able to use entropies to obtain compactness as in \cite{dkmocompactness}. Instead, we use the new magnetostriction term to buy us compactness. But for this, we must make assumptions on certain degenerate Fourier modes; it is this obstruction that limits us to the class $\mathcal{M}_0$. We believe that this is a technical restriction, and hope to pursue it elsewhere. Instead of using entropies for compactness, however, we use it for \textit{regularity}. Morally, our contradiction argument hinges on the magnetostatic energy whose vanishing requires that the limiting magnetization be tangent to $\partial \Omega.$ But even making sense of this requires a strong notion of trace along $\partial \Omega,$ which a generic $L^p$ function does not have. It is here that the specific regularity we prove, and the deep result of \cite{GhirLam} help us. This application bears analogy with scalar conservation laws: a result of A. Vasseur demonstrates the existence of strong $L^1$ traces for solutions of conservation laws with finite entropy production \cite{vasseur}. 
\item We believe that the assumption of membership to class $\mathcal{M}_0$ is not too restrictive: it respects the natural symmetries of the problem, with respect to the wells $\mathbb{K}.$ More importantly, our constructions satisfy the assumptions in this class. Roughly, it says that the construction ``makes use of all four of the wells'' for the magnetization equally.
\item That said, it would be very desirable to remove this restriction: for instance, a modification of the Privorotskii construction which is based on branching, is found in Iron. This construction was studied by Lifshitz as early as 1945 \cite{lifshitz1945magnetic}. This construction achieves the same scaling as our optimal one since it has zero magnetostriction by making use of only one pair of easy axes. For the same reason, it doesn't satisfy the assumptions for membership in $\mathcal{M}_0.$ 
\end{enumerate}
}
\section*{Acknowledgements} We would like to thank Robert V. Kohn for several useful comments on an earlier draft of this paper, and an anonymous referee for catching an error in an earlier version. We thank Felix Otto for pointing out a small error in \cite[Figure 2 a]{JNLS2018} of our previous paper, where the middle zig-zag lines were inverted. The correct figure is Figure \ref{fig: zig-zag_2}, making magnetization is divergence free. \par 
\noindent  
The research of R.V was partially supported by the Center for Nonlinear Analysis at Carnegie Mellon University, by an AMS-Simons travel award, and by the National
Science Foundation Grant No. DMS-1411646. The work of RDJ was supported by NSF (DMREF-1629026), and it also benefitted from the support of ONR (N00014-18-1-2766), 
the MURI Program (FA9550-12-1-0458, FA9550-16-1-0566), the RDF Fund of IonE, the Norwegian Centennial Chair Program
and the hospitality and support of the Isaac Newton Institute (EPSRC grant EP/R014604/1).
\renewcommand{\refname}{\normalsize \text{REFERENCES}}
\footnotesize
\bibliographystyle{plain}
\bibliography{references}
\end{document}